\providecommand{\U}[1]{\protect\rule{.1in}{.1in}}
\newtheorem{theorem}{Theorem}
\newtheorem{corollary}[theorem]{Corollary}
\newtheorem{lemma}[theorem]{Lemma}
\newtheorem{proposition}[theorem]{Proposition}
\newtheorem{remark}[theorem]{Remark}
\newenvironment{proof}[1][Proof]{\noindent\textbf{#1.} }{\ \rule{0.5em}{0.5em}}
\let\@fnsymbol\@arabic
\begin{document}

\title{Asymptotic analysis of a family of Sobolev orthogonal polynomials related to
the generalized Charlier polynomials}
\author{Diego Dominici \thanks{Research Institute for Symbolic Computation, Johannes
Kepler University Linz, Altenberger Stra\ss e 69, 4040 Linz, Austria. e-mail:
ddominic@risc.uni-linz.ac.at}
\textsuperscript{,}
\thanks{Department of Mathematics, State
University of New York at New Paltz, 1 Hawk Dr., New Paltz, NY 12561-2443,
USA.}
\and Juan Jos\'{e} Moreno Balc\'{a}zar \thanks{Departamento de Matem\'{a}ticas and
Instituto Carlos I de F\'{\i}sica Te\'{o}rica y Computacional, Universidad de
Almer\'{\i}a, La Ca\~{n}ada de San Urbano s/n, 04120 Almer\'{\i}a, Spain.
e-mail: balcazar@ual.es}}
\maketitle

\begin{abstract}
In this paper we tackle the asymptotic behavior of a family of orthogonal
polynomials with respect to a nonstandard inner product involving the forward
operator $\Delta$. Concretely, we treat the generalized Charlier weights in
the framework of $\Delta$--Sobolev orthogonality. We obtain an asymptotic
expansion for this orthogonal polynomials where the falling factorial
polynomials play an important role.

\end{abstract}

\section{Introduction}

Let $\mathbb{N}_{0}$ be the set of nonnegative integers%
\[
\mathbb{N}_{0}=\mathbb{N}\cup\left\{  0\right\}  =\left\{  0,1,2,\ldots
\right\}  .
\]
If $\mathfrak{L}:\mathbb{R}\left[  x\right]  \rightarrow\mathbb{R}$ is a
linear functional, we say that a sequence $\left\{  p_{n}\right\}  _{n\geq0},$
$\deg\left(  p_{n}\right)  =n,$ is an \emph{orthogonal polynomial sequence}
with respect to $\mathfrak{L}$ if%
\begin{equation}
\mathfrak{L}\left[  p_{k}p_{n}\right]  =h_{n}\delta_{k,n},\quad k,n\in
\mathbb{N}_{0},\quad h_{n}\neq0, \label{ortho}%
\end{equation}
where $\delta_{k,n}$ denotes the Kronecker delta. If $h_{n}=1,$ then $\left\{
p_{n}\right\}  _{n\geq0}$ is said to be an{ }\emph{orthonormal polynomial
sequence}. {We denote by $\{\mu_{n}\}_{n\geq0}$ the \emph{moment sequence} of
the functional $\mathfrak{L}$ on the monomial basis,
\[
\mu_{n}=\mathfrak{L}[x^{n}],\quad n\in\mathbb{N}_{0},
\]
and assume that the \emph{Hankel determinants} are nonzero}%
\[
\underset{0\leq i,j\leq n-1}{\det}\left(  \mu_{i+j}\right)  \neq0,\quad
n\in\mathbb{N}_{0}.
\]

Let $\left\{  \mathrm{p}_{n}\right\}  _{n\geq0}$ be the sequence of
\textbf{monic} {polynomials}, {orthogonal }with respect to $\mathfrak{L}$.
From (\ref{ortho}), we see that%
\[
\mathfrak{L}\left[  x\mathrm{p}_{n}\mathrm{p}_{k}\right]  =0,\quad k\neq
n,n\pm1,
\]
and therefore the polynomials $\mathrm{p}_{n}\left(  x\right)  $ satisfy the
\emph{three-term recurrence relation}
\begin{equation}
x\mathrm{p}_{n}=\mathrm{p}_{n+1}+\beta_{n}\mathrm{p}_{n}+\gamma_{n}%
\mathrm{p}_{n-1},\quad n\in\mathbb{N}_{0}, \label{3-term}%
\end{equation}
with initial values $\mathrm{p}_{0}(x)=1,$ \ $\mathrm{p}_{1}(x)=x-\beta_{0}.$
{Using (\ref{ortho}),} the coefficients $\beta_{n},\gamma_{n}$ are given by
\begin{equation}
\beta_{n}=\frac{\mathfrak{L}\left[  x\mathrm{p}_{n}^{2}\right]  }{h_{n}}%
,\quad\gamma_{n}=\frac{\mathfrak{L}\left[  x\mathrm{p}_{n}\mathrm{p}%
_{n-1}\right]  }{h_{n-1}},\quad n\in\mathbb{N}, \label{beta, gamma}%
\end{equation}
with initial values
\begin{equation}
\beta_{0}=\frac{\mu_{1}}{\mu_{0}},\quad\gamma_{0}=0. \label{initial}%
\end{equation}
Note that (again using (\ref{ortho})), we have%
\[
h_{n}=\mathfrak{L}\left[  x^{n}\mathrm{p}_{n}\right]  =\mathfrak{L}\left[
x\mathrm{p}_{n}\mathrm{p}_{n-1}\right]  =\gamma_{n}h_{n-1},\quad
n\in\mathbb{N},
\]
and therefore%
\begin{equation}
\gamma_{n}=\frac{h_{n}}{h_{n-1}},\quad n\in\mathbb{N}. \label{gamma-h}%
\end{equation}

\begin{remark}
Note that $\gamma_{0}$ is in principle arbitrary since one can always define
$\mathrm{p}_{-1}=0.$ The choice $\gamma_{0}=0$ is both convenient for the
calculations and \textbf{consistent} with all the families in the Askey-scheme
of hypergeometric (non $q)$ orthogonal polynomials (see \cite{MR2656096}).
\end{remark}

The \textbf{monic} \emph{Generalized Charlier polynomials}, $P_{n}\left(
x;z\right)  ,$ are orthogonal with respect to the linear functional
\cite{MR4524369}%
\begin{equation}
L\left[  p\right]  =%
{\displaystyle\sum\limits_{x=0}^{\infty}}
\frac{p\left(  x\right)  }{\left(  b+1\right)  _{x}}\frac{z^{x}}{x!},\quad
p\in\mathbb{R}\left[  x\right]  ,\quad z>0, \label{L}%
\end{equation}
where $b>-1$ and the \emph{Pochhammer symbol} is defined by \cite[5.2.4]%
{MR2723248}
\begin{equation}
\left(  c\right)  _{n}=%
{\displaystyle\prod\limits_{j=0}^{n-1}}
\left(  c+j\right)  ,\quad n\in\mathbb{N},\quad\left(  c\right)  _{0}=1.
\label{poch poly}%
\end{equation}

In \cite{MR1737084} Hounkonnou, Hounga, and Ronveaux studied the orthogonal
polynomials associated with the linear functional%
\begin{equation}
L_{r}\left[  p\right]  =%
{\displaystyle\sum\limits_{x=0}^{\infty}}
p\left(  x\right)  \frac{z^{x}}{\left(  x!\right)  ^{r}},\quad p\in
\mathbb{R}\left[  x\right]  ,\quad r\in\mathbb{N}. \label{GenCharlier}%
\end{equation}
When $r=2,$ they derived nonlinear recurrences (known as the
\emph{Laguerre-Freud equations}) for the recurrence coefficients, and a
second-order difference equation for the orthogonal polynomials associated
with $L_{r}$. Note that the case $r=2$ is a particular example of (\ref{L})
with $b=0.$

In \cite{MR2063533} Van Assche and Foupouagnigni also considered
(\ref{GenCharlier}) with $r=2.$ They simplified the Laguerre-Freud equations
obtained in \cite{MR1737084}, and obtained%
\[
u_{n+1}+u_{n-1}=\frac{1}{\sqrt{z}}\frac{nu_{n}}{1-u_{n}^{2}},\quad v_{n}%
=\sqrt{z}u_{n+1}u_{n},
\]
with $\gamma_{n}=z\left(  1-u_{n}^{2}\right)  $ and $\beta_{n}=v_{n}+n$. They
showed that these equations are related to the discrete Painlev\'{e} II
equation dP$_{\mathrm{II}}$. In \cite{MR2957309}, Smet and Van Assche studied
the orthogonal polynomials associated with (\ref{L}). They obtained the
Laguerre-Freud equations
\begin{align}
\left(  \gamma_{n+1}-z\right)  \left(  \gamma_{n}-z\right)   &  =z\left(
\beta_{n}-n\right)  \left(  \beta_{n}-n+b\right)  ,\label{LFCharlier}\\
\beta_{n}+\beta_{n-1}  &  =n-1-b+\frac{nz}{\gamma_{n}},\nonumber
\end{align}
and showed that these equations are a limiting case of the discrete
Painlev\'{e} IV equation dP$_{\mathrm{IV}}$ \cite{MR3729446}.

{We are interested in an inner product in the framework of Sobolev-type
orthogonality. Concretely, a $\Delta$--Sobolev inner product involving the
linear functional $L$ given in (\ref{L}), i.e.}%

\begin{equation}
\left\langle p,q\right\rangle =L\left[  pq\right]  +\lambda L\left[  \Delta
p\Delta q\right]  ,\quad p,q\in\mathbb{R}\left[  x\right]  , \label{inner}%
\end{equation}
where $\lambda\geq0,$ and the \emph{forward }$\Delta$ and \emph{backward}
$\nabla$ \emph{difference operators} (in $x)$ are defined by%
\[
\Delta\left[  p\right]  =p\left(  x+1\right)  -p\left(  x\right)  ,\quad
\nabla\left[  p\right]  =p(x)-p\left(  x-1\right)  .
\]
We will denote by $\{S_{n}\left(  x;\lambda,z\right)  \}_{n\geq0}$ the
sequence of \textbf{monic} polynomials orthogonal with respect to the inner
product (\ref{inner}).

{The study of Sobolev orthogonality, and corresponding orthogonal polynomials,
is a relatively recent topic in the theory of orthogonal polynomials. The
first seminal paper was written by Lewis in 1947 (see \cite{MR20670}) and
other foundational articles were written in the sixties and seventies of the
last century. However, the eclosion of investigations about this topic took
place in the nineties. Sobolev orthogonal polynomials are attractive because
they are not orthogonal in a standard way. For this reason nice properties of
standard orthogonal polynomials such as the three-term recurrence relation,
Christoffel--Darboux formula, etc. are lost. Therefore, it was necessary to
construct a new (unfinished) theory. Originally, the Sobolev inner products
involved the derivative operator. But, there is no reason why one should not
consider other operators. In this paper, as we have mentioned previously, we
consider a Sobolev inner product involving the forward difference operator
$\Delta$, the so called $\Delta$--Sobolev orthogonality in some papers (see,
for example, \cite{MR1949214}, \cite{MR1741786}, \cite{MR2127867},
\cite{MR3319506}).}

The paper is organized as follows: in Section 2 we introduce some basic facts
which are useful to establish the main results in this paper. In Section 3 we
obtain some properties of the $\Delta$--Charlier--Sobolev orthogonal
polynomials, which allow us to obtain an asymptotic expansion for them in
Section 4.

\section{Preliminary material}

In this section, we review some of material that we will need in the rest of
the paper.

\begin{lemma}
If
\begin{equation}
\phi\left(  x\right)  =x\left(  x+b\right)  ,\quad\psi\left(  x\right)  =z,
\label{phi psi}%
\end{equation}
then the functional (\ref{L}) satisfies the \emph{Pearson equation}
\begin{equation}
L\left[  \psi\mathfrak{S}p\right]  =L\left[  \phi p\right]  ,\quad
p\in\mathbb{R}\left[  x\right]  , \label{Pearson}%
\end{equation}
where $\mathfrak{S}$ denotes the shift operator (in $x)$%
\[
\mathfrak{S}\left[  p\right]  =p\left(  x+1\right)  .
\]

\end{lemma}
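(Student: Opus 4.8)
The plan is to prove (\ref{Pearson}) by a direct computation: expand the left-hand side using the definition (\ref{L}) of $L$, perform an index shift, and recognize the result as $L[\phi p]$. Since $z>0$ and a polynomial grows only polynomially, the series defining $L[q]$ converges absolutely for every $q\in\mathbb{C}\left[x\right]$ (the factor $z^{x}/x!$ dominates), and this absolute convergence is exactly what will license the reindexing.

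First I would write
\[
L\left[\psi\,\mathfrak{S}p\right]=L\left[z\,p(x+1)\right]=\sum_{x=0}^{\infty}\frac{p(x+1)}{\left(b+1\right)_{x}}\,\frac{z^{x+1}}{x!},
\]
and then set $y=x+1$, obtaining $\sum_{y=1}^{\infty}\frac{p(y)}{(b+1)_{y-1}}\,\frac{z^{y}}{(y-1)!}$. Next I would simplify the coefficient using the two elementary consequences of (\ref{poch poly}), namely $(b+1)_{y}=(b+1)_{y-1}(b+y)$ and $y!=y\,(y-1)!$; together these give
\[
\frac{1}{(b+1)_{y-1}\,(y-1)!}=\frac{y\,(y+b)}{(b+1)_{y}\,y!}.
\]

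Finally, because the factor $y(y+b)$ vanishes at $y=0$, the summation index may be extended back to $y=0$ at no cost, so that
\[
L\left[\psi\,\mathfrak{S}p\right]=\sum_{y=0}^{\infty}\frac{y(y+b)\,p(y)}{(b+1)_{y}}\,\frac{z^{y}}{y!}=L\left[\phi\,p\right],
\]
since $\phi(x)=x(x+b)$; this is precisely (\ref{Pearson}). I do not expect any genuine obstacle here, as the argument is entirely elementary; the only points deserving a word of care are the legitimacy of swapping the shift with the infinite sum, which is covered by the absolute convergence noted above, and the correct handling of the $y=0$ boundary term when the lower limit is restored.
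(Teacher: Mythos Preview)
Your proof is correct and follows essentially the same route as the paper: expand $L[\psi\,\mathfrak{S}p]$ from the definition, shift the summation index by one, use $(b+1)_{y}=(b+1)_{y-1}(b+y)$ and $y!=y\,(y-1)!$ to pull out the factor $y(y+b)=\phi(y)$, and then restore the $y=0$ term since $\phi(0)=0$. The only difference is that you add an explicit remark on absolute convergence to justify the reindexing, which the paper leaves implicit.
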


\begin{proof}
We see from (\ref{L}) that%
\begin{gather*}%
{\displaystyle\sum\limits_{x=0}^{\infty}}
\frac{zp\left(  x+1\right)  }{\left(  b+1\right)  _{x}}\frac{z^{x}}{x!}=%
{\displaystyle\sum\limits_{x=1}^{\infty}}
\frac{p\left(  x\right)  }{\left(  b+1\right)  _{x-1}}\frac{z^{x}}{\left(
x-1\right)  !}\\
=%
{\displaystyle\sum\limits_{x=1}^{\infty}}
\frac{x\left(  x+b\right)  p\left(  x\right)  }{\left(  b+1\right)  _{x}}%
\frac{z^{x}}{x!}=%
{\displaystyle\sum\limits_{x=0}^{\infty}}
\frac{x\left(  x+b\right)  p\left(  x\right)  }{\left(  b+1\right)  _{x}}%
\frac{z^{x}}{x!},
\end{gather*}
and (\ref{Pearson}) follows.
\end{proof}

In general, we say that a functional $L$ satisfying the Pearson equation
(\ref{Pearson}), where $\phi\left(  x\right)  ,\psi\left(  x\right)  $ are
\textbf{fixed} polynomials, is \emph{discrete semiclassical}. Note that we can
also write (\ref{Pearson}) as%
\[
L\left[  \psi\Delta p\right]  =L\left[  \left(  \phi-\psi\right)  p\right]
,\quad p\in\mathbb{R}\left[  x\right]  .
\]

The \emph{class} of the functional $L$ is defined by%
\begin{equation}
s=\max\left\{  \deg\left(  \phi-\psi\right)  -1,\deg\left(  \phi\right)
-2\right\}  , \label{class}%
\end{equation}
and semiclassical functionals of class\emph{ }$s=0$ are called
\emph{classical} \cite{MR1340932}. Note that from (\ref{phi psi}) and
(\ref{class}) it follows that the generalized Charlier polynomials are
discrete semiclassical of class $s=1.$ In \cite{MR3227440}, the discrete
semiclassical orthogonal polynomials of class $s\leq1$ were classified, and in
\cite{MR4238531} the results were extended to $s\leq2.$

\begin{proposition}
Let $\{\mathrm{p}_{n}\left(  x\right)  \}_{n\ge0} $ be the sequence of monic
polynomials orthogonal with respect to a linear functional $L$ satisfying the
Pearson equation (\ref{Pearson}) with $\deg\left(  \phi\right)  =r,\deg\left(
\psi\right)  =t$.

(i) The polynomials $\mathrm{p}_{n}\left(  x\right)  $ satisfy the structure
equation
\begin{equation}
\psi\left(  x\right)  \mathrm{p}_{n}\left(  x+1\right)  =%
{\displaystyle\sum\limits_{k=-r}^{t}}
A_{k}\left(  n\right)  \mathrm{p}_{n+k}\left(  x\right)  , \label{DE1}%
\end{equation}
where the coefficients $A_{k}\left(  n\right)  $ are solutions of the
recurrence equation%
\begin{align}
&  \gamma_{n+k+1}A_{k+1}\left(  n\right)  -\gamma_{n}A_{k+1}\left(
n-1\right)  +A_{k-1}\left(  n\right)  -A_{k-1}\left(  n+1\right)
\label{req A}\\
&  =\left(  \beta_{n}-\beta_{n+k}-1\right)  A_{k}\left(  n\right)  ,\nonumber
\end{align}
with%
\begin{equation}
A_{t}\left(  n\right)  =z,\quad A_{-r}\left(  n\right)  =\gamma_{n}%
\gamma_{n-1}\cdots\gamma_{n-r+1}. \label{Bound A}%
\end{equation}
and $A_{k}\left(  n\right)  =0,$ $k\notin\left[  -r,t\right]  $.

(ii) The generalized Charlier polynomials $P_{n}\left(  x;z\right)  $ satisfy%
\begin{equation}
\Delta P_{n}=nP_{n-1}+\xi_{n}P_{n-2},\quad n\in\mathbb{N}_{0}, \label{diff Pn}%
\end{equation}
where%
\begin{equation}
\xi_{n}=\frac{\gamma_{n}\gamma_{n-1}}{z},\quad n\in\mathbb{N}_{0}. \label{xi}%
\end{equation}

\end{proposition}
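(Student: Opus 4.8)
The plan is to prove part~(i) first and then obtain part~(ii) as the special case $r=2$, $t=0$. For part~(i), the starting point is that $\psi(x)\mathrm{p}_n(x+1)$ is a polynomial of degree $n+t$ (since $\deg\psi=t$), so it has an expansion $\psi(x)\mathrm{p}_n(x+1)=\sum_{k}A_k(n)\mathrm{p}_{n+k}(x)$ in the orthogonal basis; I must show the sum runs only over $-r\le k\le t$. The lower bound $k\ge -r$ follows from orthogonality: $A_k(n)=h_{n+k}^{-1}L[\psi(x)\mathrm{p}_n(x+1)\mathrm{p}_{n+k}(x)]$, and using the Pearson equation in the form $L[\psi\,\mathfrak{S}p]=L[\phi p]$ one rewrites, after shifting, the numerator so that it vanishes when $\deg(\phi\mathrm{p}_{n+k})<n$, i.e. when $n+k+r<n$. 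The upper bound $k\le t$ is just degree counting. The coefficient identities~\eqref{req A} come from inserting the three–term recurrence~\eqref{3-term} for $\mathrm{p}_{n+k}(x)$ — written both as $x\mathrm{p}_{n+k}=\mathrm{p}_{n+k+1}+\beta_{n+k}\mathrm{p}_{n+k}+\gamma_{n+k}\mathrm{p}_{n+k-1}$ — into both sides of~\eqref{DE1} after multiplying by $x$ (equivalently, by comparing the expansions of $x\psi(x)\mathrm{p}_n(x+1)$ obtained in two ways: via the recurrence on $\mathrm{p}_n(x+1)$ on the left and via the recurrence on each $\mathrm{p}_{n+k}(x)$ on the right), and matching coefficients of $\mathrm{p}_{n+k}(x)$. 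Here one also uses $x\,\mathrm p_n(x+1)=(x+1)\mathrm p_n(x+1)-\mathrm p_n(x+1)$ so the recurrence can be applied to the argument $x+1$. The boundary values~\eqref{Bound A}: $A_t(n)$ is the leading coefficient of $\psi(x)\mathrm{p}_n(x+1)$ relative to $\mathrm{p}_{n+t}$, which is the leading coefficient of $\psi$, namely $z$ (from~\eqref{phi psi}, $\psi\equiv z$, so $t=0$ in our case and $A_0(n)=z$); and $A_{-r}(n)$ is computed from $A_{-r}(n)=h_{n-r}^{-1}L[\psi(x)\mathrm{p}_n(x+1)\mathrm{p}_{n-r}(x)]$, where after using Pearson and the three–term recurrence repeatedly the only surviving term is $\gamma_n\gamma_{n-1}\cdots\gamma_{n-r+1}$ via~\eqref{gamma-h}.

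For part~(ii) I specialize to the generalized Charlier case: $\phi(x)=x(x+b)$ so $r=2$, and $\psi\equiv z$ so $t=0$. Then~\eqref{DE1} reads $zP_n(x+1)=z\,P_n(x)+A_{-1}(n)P_{n-1}(x)+A_{-2}(n)P_{n-2}(x)$; dividing by $z$ and subtracting $P_n(x)$ gives $\Delta P_n=\frac{A_{-1}(n)}{z}P_{n-1}+\frac{A_{-2}(n)}{z}P_{n-2}$. By~\eqref{Bound A}, $A_{-2}(n)=\gamma_n\gamma_{n-1}$, so the $P_{n-2}$ coefficient is $\xi_n=\gamma_n\gamma_{n-1}/z$ as claimed. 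It remains to identify the $P_{n-1}$ coefficient as $n$. This I would get either by comparing leading coefficients in $\Delta P_n=nP_{n-1}+\xi_nP_{n-2}$ — the leading term of $\Delta x^n$ is $nx^{n-1}$, forcing the $P_{n-1}$ coefficient to be exactly $n$ since the $P_k$ are monic — or, equivalently, by solving the recurrence~\eqref{req A} with $k=-1$ together with the boundary data. The leading-coefficient argument is cleanest and bypasses~\eqref{req A} entirely for this step.

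The main obstacle is the bookkeeping in establishing~\eqref{req A}: one must carefully handle the shifted argument $x+1$ versus $x$, decide to apply the three–term recurrence to $(x+1)\mathrm p_n(x+1)$, and then reindex the double appearance of $\gamma$ and $\beta$ coefficients so that the telescoping differences $A_{k-1}(n)-A_{k-1}(n+1)$ and $\gamma_{n+k+1}A_{k+1}(n)-\gamma_nA_{k+1}(n-1)$ emerge with the correct signs. The orthogonality/Pearson step for the support bounds is conceptually the heart of the argument but computationally light; the recurrence~\eqref{req A} is the reverse. For the purposes of the present paper, though, only~\eqref{diff Pn}–\eqref{xi} are used downstream, and those follow from the much shorter specialization plus leading-coefficient comparison described above, so I would present part~(ii) in that streamlined way and relegate the general coefficient recurrence to a citation of the standard semiclassical theory (e.g.\ our earlier classification papers \cite{MR3227440,MR4238531}) if a full derivation of~\eqref{req A} is not needed.
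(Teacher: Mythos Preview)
Your proposal is correct and structurally matches the paper: part~(i) is cited rather than proved in full (the paper simply writes ``See \cite{MR3813279}''), and part~(ii) is obtained by specializing to $r=2$, $t=0$, reading off $A_0(n)=z$ and $A_{-2}(n)=\gamma_n\gamma_{n-1}$ from \eqref{Bound A}, and then identifying $A_{-1}(n)$.

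The one genuine difference is in how $A_{-1}(n)$ is pinned down. You use the leading-coefficient argument ($\Delta P_n$ has leading term $nx^{n-1}$, forcing the $P_{n-1}$ coefficient to be $n$), which is clean and avoids \eqref{req A} entirely. The paper instead sets $k=0$ (not $k=-1$, as you suggest in your alternative) in \eqref{req A}: since $A_1\equiv 0$ and $A_0\equiv z$, this collapses to $A_{-1}(n+1)-A_{-1}(n)=z$, whence $A_{-1}(n)=nz$. Your route is more elementary and self-contained; the paper's route has the virtue of exercising the general machinery of part~(i) and showing that \eqref{req A} alone already determines the missing coefficient. Note that your parenthetical ``$k=-1$'' would give the relation $\gamma_n(\gamma_{n-1}-\gamma_{n+1})=(\beta_n-\beta_{n-1}-1)A_{-1}(n)$, which does not isolate $A_{-1}(n)$ without further input, so if you want to mention the recurrence alternative, point to $k=0$.
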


\begin{proof}
(i) See \cite{MR3813279}.

(ii) If $\phi\left(  x\right)  ,\psi\left(  x\right)  $ are given by
(\ref{phi psi}), then $t=0,r=2$ and therefore%
\begin{equation}
A_{0}\left(  n\right)  =z,\quad A_{-2}\left(  n\right)  =\gamma_{n}%
\gamma_{n-1}. \label{A0}%
\end{equation}
Setting $k=0$ in (\ref{req A}), we get%
\[
A_{-1}\left(  n+1\right)  -A_{-1}\left(  n\right)  =z,
\]
and we conclude that%
\begin{equation}
A_{-1}\left(  n\right)  =nz. \label{A1}%
\end{equation}

Using (\ref{A0}) and (\ref{A1}) in (\ref{DE1}) we obtain%
\[
zP_{n}\left(  x+1\right)  =zP_{n}\left(  x\right)  +nzP_{n-1}\left(  x\right)
+\gamma_{n}\gamma_{n-1}P_{n-2}\left(  x\right)  ,
\]
and (\ref{diff Pn}) follows.
\end{proof}

The relation (\ref{diff Pn}) says that the generalized Charlier polynomials
are \emph{self-coherent of the second kind}. In general, we say that two
sequences of monic orthogonal polynomials $\{P_{n}(\rho_{0};x)\}_{n\geq0}$ and
$\{P_{n}(\rho_{1};x)\}_{n\geq0}$ are \emph{$\Delta$-coherent of the second
kind }if they satisfy \cite{Coherent}\emph{ }
\[
\frac{1}{n+1}\Delta P_{n+1}(\rho_{0};x)=P_{n}(\rho_{1};x)-\xi_{n}P_{n-1}%
(\rho_{1};x),\quad\xi_{n}\neq0,
\]
for all $n\geq1$.

For additional references see \cite{MR4552386} (on the real line),
\cite{MR4367465}, \cite{MR3648465} (on the unit circle), and \cite{Pastro}
($q$--polynomials). Note that \cite{MR4367465}, \cite{MR4552386}, and
\cite{MR3648465} deal with coherence pairs of the second kind using the
derivative operator while \cite{Pastro} deals with coherence pairs on the unit
circle using the $q$-derivative operator.\newline

\begin{remark}
If $k=-1,-2,$ then (\ref{req A}) gives%
\begin{align*}
\gamma_{n}\left(  \gamma_{n-1}-\gamma_{n+1}\right)   &  =nz\left(  \beta
_{n}-\beta_{n-1}-1\right)  ,\\
nz\gamma_{n-1}-\left(  n-1\right)  z\gamma_{n}  &  =\gamma_{n}\gamma
_{n-1}\left(  \beta_{n}-\beta_{n-2}-1\right)  ,
\end{align*}
from which the Laguerre-Freud equations (\ref{LFCharlier}) can be derived (see
\cite{MR2957309}, equation 2.14 and beyond).
\end{remark}

Equation (\ref{diff Pn}) was derived in \cite{MR2957309} using the method
presented in \cite{MR2745405}. For a different approach using infinite
matrices, see \cite{MR4063039}.

Let $\varphi_{n}\left(  x\right)  $ denote the \emph{falling factorial
polynomials }defined by $\varphi_{0}\left(  x\right)  =1$ and
\begin{equation}
\varphi_{n}\left(  x\right)  =%
{\displaystyle\prod\limits_{k=0}^{n-1}}
\left(  x-k\right)  ,\quad n\in\mathbb{N}. \label{phi}%
\end{equation}
Note that we can write%
\begin{equation}
\varphi_{n}\left(  x\right)  =\frac{\Gamma\left(  x+1\right)  }{\Gamma\left(
x-n+1\right)  }=n!\binom{x}{n},\quad n\in\mathbb{N}_{0}, \label{gamma}%
\end{equation}
where $\Gamma$ denotes the gamma function \cite[5.2.1]{MR2723248}.

\begin{proposition}
The moments of the functional $L$ on the basis $\{\varphi_{n}(x)\}_{n\geq0}$
are given by%
\begin{equation}
\nu_{n}\left(  z\right)  =L\left[  \varphi_{n}\right]  =\frac{z^{n}}{\left(
b+1\right)  _{n}}\ _{0}F_{1}\left(
\begin{array}
[c]{c}%
-\\
b+n+1
\end{array}
;z\right)  , \label{moments}%
\end{equation}
where$\ _{p}F_{q}$ is the generalized hypergeometric function \cite[16.2.1]%
{MR2723248}.
\end{proposition}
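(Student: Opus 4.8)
The plan is to evaluate $\nu_{n}(z)=L[\varphi_{n}]$ directly from the definition of $L$ in (\ref{L}) and to recognize the resulting power series as a confluent hypergeometric function. First I would write
\[
\nu_{n}(z)=\sum_{x=0}^{\infty}\frac{\varphi_{n}(x)}{(b+1)_{x}}\frac{z^{x}}{x!}.
\]
The key simplification comes from (\ref{phi}) and (\ref{gamma}): when $x\in\{0,1,\ldots,n-1\}$ one of the factors $x-k$ in $\varphi_{n}(x)$ vanishes, so those terms drop out, while for an integer $x\geq n$ we have $\varphi_{n}(x)=\Gamma(x+1)/\Gamma(x-n+1)=x!/(x-n)!$. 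Hence the factor $x!$ cancels and the sum collapses to $\sum_{x=n}^{\infty}z^{x}/\big((x-n)!\,(b+1)_{x}\big)$.

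Next I would re-index with $j=x-n$ and use the elementary splitting of the Pochhammer symbol $(b+1)_{n+j}=(b+1)_{n}(b+n+1)_{j}$, which is immediate from (\ref{poch poly}). This gives
\begin{align*}
\nu_{n}(z)
&=\sum_{x=0}^{\infty}\frac{\varphi_{n}(x)}{(b+1)_{x}}\frac{z^{x}}{x!}
=\sum_{x=n}^{\infty}\frac{1}{(x-n)!}\frac{z^{x}}{(b+1)_{x}}\\
&=\sum_{j=0}^{\infty}\frac{1}{j!}\frac{z^{n+j}}{(b+1)_{n+j}}
=\frac{z^{n}}{(b+1)_{n}}\sum_{j=0}^{\infty}\frac{z^{j}}{(b+n+1)_{j}\,j!},
\end{align*}
and the last series is by definition $\ _{0}F_{1}\!\left(\begin{array}{c}-\\ b+n+1\end{array};z\right)$, which is exactly (\ref{moments}).

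Honestly there is no serious obstacle here: the computation is a single manipulation of one series, so no interchange of summations needs to be justified. The only point worth stating explicitly is that $b\notin\{-1,-2,\ldots\}$, so that $(b+1)_{x}$ never vanishes and both $L$ (for $z>0$) and the ${}_0F_{1}$ series (which is entire in $z$) are well defined; everything in sight is an absolutely convergent series, so all rearrangements are legitimate.
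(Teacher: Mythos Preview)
Your proof is correct and follows essentially the same route as the paper: start from the series defining $L$, use $\varphi_{n}(x)=x!/(x-n)!$ so that the sum begins at $x=n$, shift the index, split the Pochhammer symbol via $(b+1)_{n+j}=(b+1)_{n}(b+n+1)_{j}$, and identify the remaining series as a $\,_{0}F_{1}$. Your added remarks on the vanishing of $\varphi_{n}(x)$ for $x<n$ and on the condition $b\notin\{-1,-2,\ldots\}$ for convergence are more explicit than the paper's version, but the argument is the same.
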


\begin{proof}
Using (\ref{L}) and (\ref{gamma}), we have%
\[
L\left[  \varphi_{n}\right]  =\
{\displaystyle\sum\limits_{x=n}^{\infty}}
\frac{1}{\left(  b+1\right)  _{x}}\frac{z^{x}}{\left(  x-n\right)  !}=%
{\displaystyle\sum\limits_{x=0}^{\infty}}
\frac{1}{\left(  b+1\right)  _{x+n}}\frac{z^{x+n}}{x!},
\]
and since%
\[
\left(  c\right)  _{n+m}=\left(  c\right)  _{n}\left(  c+n\right)  _{m},
\]
we obtain%
\[
L\left[  \varphi_{n}\right]  =%
{\displaystyle\sum\limits_{x=0}^{\infty}}
\frac{1}{\left(  b+1\right)  _{n}\left(  b+n+1\right)  _{x}}\frac{z^{x+n}}%
{x!},
\]
and (\ref{moments}) follows.
\end{proof}

\begin{lemma}
The polynomials $\varphi_{n}\left(  x\right)  $ satisfy the linearization
formula%
\begin{equation}
\varphi_{n}\left(  x\right)  \varphi_{m}\left(  x\right)  =%
{\displaystyle\sum\limits_{k=0}^{\min\left\{  n,m\right\}  }}
\binom{n}{k}\binom{m}{k}k!\varphi_{n+m-k}\left(  x\right)  . \label{conv}%
\end{equation}

\end{lemma}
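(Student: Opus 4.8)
The plan is to reduce the claimed identity to a classical binomial convolution by means of the representation $\varphi_{n}\left(x\right)=n!\binom{x}{n}$ recorded in (\ref{gamma}). Dividing (\ref{conv}) by $n!\,m!$ and using the identity $\binom{n}{k}\binom{m}{k}k!=\dfrac{n!\,m!}{k!\left(n-k\right)!\left(m-k\right)!}$, the statement is equivalent to
\[
\binom{x}{n}\binom{x}{m}=\sum_{k=0}^{\min\left\{n,m\right\}}\frac{\left(n+m-k\right)!}{k!\left(n-k\right)!\left(m-k\right)!}\binom{x}{n+m-k}.
\]
Since both sides are polynomials in $x$ of degree $n+m$, it suffices to verify this for every $x\in\mathbb{N}_{0}$, where a counting argument is available.

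For $x\in\mathbb{N}_{0}$, the left-hand side counts the pairs $\left(A,B\right)$ with $A,B\subseteq\left\{1,\dots,x\right\}$, $\left\vert A\right\vert =n$, $\left\vert B\right\vert =m$. I would classify such pairs by the integer $k=\left\vert A\cap B\right\vert$ together with the set $C=A\cup B$, which has cardinality $n+m-k$: choosing $C$ contributes $\binom{x}{n+m-k}$, choosing $A\cap B$ inside $C$ contributes $\binom{n+m-k}{k}$, and splitting the remaining $n+m-2k$ elements of $C$ into $A\setminus B$ and $B\setminus A$ contributes $\binom{n+m-2k}{n-k}$. Since $\binom{n+m-k}{k}\binom{n+m-2k}{n-k}=\dfrac{\left(n+m-k\right)!}{k!\left(n-k\right)!\left(m-k\right)!}$, summing over $k$ yields the displayed identity, and multiplying back by $n!\,m!$ gives (\ref{conv}).

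Alternatively, and perhaps more in keeping with the difference calculus used elsewhere in the paper, one may argue through the generating function $\sum_{n\geq0}\varphi_{n}\left(x\right)\dfrac{t^{n}}{n!}=\left(1+t\right)^{x}$: multiplying the series in two independent variables $t$ and $s$ gives $\left(1+t\right)^{x}\left(1+s\right)^{x}=\left(1+t+s+ts\right)^{x}=\sum_{N\geq0}\varphi_{N}\left(x\right)\dfrac{\left(t+s+ts\right)^{N}}{N!}$, and extracting the coefficient of $t^{n}s^{m}$ on both sides, after the multinomial expansion of $\left(t+s+ts\right)^{N}$, produces (\ref{conv}) directly with $N=n+m-k$. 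Either way there is no genuine obstacle here; the only work is the elementary bookkeeping of the binomial coefficients, so in the write-up I would simply present whichever of the two computations is shorter.
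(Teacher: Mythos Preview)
Your argument is correct, but the route differs from the paper's. The paper works from the right-hand side: using the shift identity $\varphi_{n+m-k}(x)=\varphi_{n}(x)\,\varphi_{m-k}(x-n)$ (which follows at once from the product definition of $\varphi$), it factors $\varphi_{n}(x)$ out of the sum; the remaining sum $\sum_{k}\binom{n}{k}\binom{m}{k}k!\,\varphi_{m-k}(x-n)$ is rewritten via (\ref{gamma}) as $m!\sum_{k}\binom{n}{k}\binom{x-n}{m-k}$ and collapsed by Chu--Vandermonde to $m!\binom{x}{m}=\varphi_{m}(x)$. So the paper's proof is a two-line algebraic verification that stays entirely within the falling-factorial calculus and appeals only to Chu--Vandermonde. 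Your version instead translates everything into ordinary binomial coefficients up front and then proves the resulting identity either bijectively or through the exponential generating function $(1+t)^{x}$. Both are perfectly sound and of comparable length; the paper's approach is perhaps the more economical write-up (no auxiliary counting or second variable needed), while your generating-function variant has the advantage that it \emph{derives} the coefficients rather than merely checking them, and your counting argument supplies a combinatorial interpretation the paper does not mention.
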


\begin{proof}
From the definition of $\varphi_{n}\left(  x\right)  $, we see that%
\begin{equation}
\varphi_{n+m}\left(  x\right)  =\varphi_{n}\left(  x\right)  \varphi
_{m}\left(  x-n\right)  . \label{phi sum}%
\end{equation}
Suppose that $m\leq n.$ Using (\ref{phi sum}), we have
\[%
{\displaystyle\sum\limits_{k=0}^{m}}
\binom{n}{k}\binom{m}{k}k!\varphi_{n+m-k}\left(  x\right)  =%
{\displaystyle\sum\limits_{k=0}^{m}}
\binom{n}{k}\binom{m}{k}k!\varphi_{n}\left(  x\right)  \varphi_{m-k}\left(
x-n\right)  ,
\]
and using (\ref{gamma}), we can write%
\[%
{\displaystyle\sum\limits_{k=0}^{m}}
\binom{n}{k}\binom{m}{k}k!\varphi_{m-k}\left(  x-n\right)  =m!%
{\displaystyle\sum\limits_{k=0}^{m}}
\binom{n}{k}\binom{x-n}{m-k}.
\]
Using the Chu--Vandermonde identity, we have%
\[%
{\displaystyle\sum\limits_{k=0}^{m}}
\binom{n}{k}\binom{x-n}{m-k}=\binom{x}{m},
\]
and therefore%
\[%
{\displaystyle\sum\limits_{k=0}^{m}}
\binom{n}{k}\binom{m}{k}k!\varphi_{m-k}\left(  x-n\right)  =\varphi_{m}\left(
x\right)  .
\]

\end{proof}

\begin{corollary}
For all $m,n\in\mathbb{N}_{0},$ $m\leq n,$ we have%
\begin{equation}
L\left[  \varphi_{n}\varphi_{m}\right]  =\binom{n}{m}\frac{m!}{\left(
b+1\right)  _{n}}z^{n}\left[  1+\frac{n+1}{\left(  n-m+1\right)  \left(
n+b+1\right)  }z+O\left(  z^{2}\right)  \right]  ,\quad\label{series}%
\end{equation}
as $z\rightarrow0.$
\end{corollary}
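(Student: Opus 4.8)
The plan is to combine the linearization formula (\ref{conv}) with the closed form of the moments (\ref{moments}), and then extract the first two terms of the resulting power series in $z$.

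First I would apply $L$ to (\ref{conv}). Since $m\le n$, the sum terminates at $k=m$, so
\begin{equation*}
L\left[\varphi_{n}\varphi_{m}\right]=\sum_{k=0}^{m}\binom{n}{k}\binom{m}{k}k!\,\nu_{n+m-k}(z),
\end{equation*}
where $\nu_{j}(z)=L[\varphi_{j}]$. Next, expanding the ${}_{0}F_{1}$ in (\ref{moments}) as a power series, we have
\begin{equation*}
\nu_{j}(z)=\frac{z^{j}}{\left(b+1\right)_{j}}\left[1+\frac{z}{b+j+1}+O\left(z^{2}\right)\right],\qquad z\rightarrow0 .
\end{equation*}
Because $\nu_{n+m-k}(z)$ has order $z^{n+m-k}$, only the indices $k=m$ and $k=m-1$ can contribute to the coefficients of $z^{n}$ and $z^{n+1}$; every other term is $O\left(z^{n+2}\right)$. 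The coefficient of $z^{n}$ comes only from $k=m$ and equals $\binom{n}{m}m!/\left(b+1\right)_{n}$, which is precisely the prefactor in (\ref{series}).

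For the coefficient of $z^{n+1}$ I would collect two pieces: the $O(z)$ correction of $\nu_{n}(z)$ arising in the $k=m$ term, and the leading term of $\nu_{n+1}(z)$ arising in the $k=m-1$ term (using $\binom{m}{m-1}(m-1)!=m!$; this term is simply absent when $m=0$). With $\left(b+1\right)_{n+1}=\left(b+1\right)_{n}\left(b+n+1\right)$, this coefficient equals
\begin{equation*}
\frac{m!}{\left(b+1\right)_{n}\left(b+n+1\right)}\left[\binom{n}{m}+\binom{n}{m-1}\right]=\frac{m!\,\binom{n+1}{m}}{\left(b+1\right)_{n}\left(b+n+1\right)}
\end{equation*}
by Pascal's rule. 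Dividing by the $z^{n}$ coefficient and using $\binom{n+1}{m}/\binom{n}{m}=(n+1)/(n-m+1)$ yields exactly the bracketed factor in (\ref{series}).

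The argument is essentially bookkeeping, and I do not expect a genuine obstacle: the only points requiring care are the truncation of the sum at $k=m$, the identification of which indices $k$ reach order $z^{n+1}$ (with the degenerate case $m=0$ handled automatically since $\binom{m}{m-1}=0$), and the small binomial-coefficient manipulation that produces the factor $n+1$ in the numerator.
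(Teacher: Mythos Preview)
Your proposal is correct and follows essentially the same route as the paper: apply $L$ to the linearization formula (\ref{conv}), substitute the expansion of the moments from (\ref{moments}), and read off the coefficients of $z^{n}$ and $z^{n+1}$. You are in fact more explicit than the paper, spelling out that only $k=m$ and $k=m-1$ contribute at the relevant orders and invoking Pascal's rule and the ratio $\binom{n+1}{m}/\binom{n}{m}=(n+1)/(n-m+1)$, whereas the paper simply records the two leading terms as $\binom{n}{m}\frac{m!}{(b+1)_{n}}z^{n}+\binom{n+1}{m}\frac{m!}{(b+1)_{n+1}}z^{n+1}+O(z^{n+2})$ without further comment.
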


\begin{proof}
Using (\ref{conv}), we get%
\[
L\left[  \varphi_{n}\varphi_{m}\right]  =%
{\displaystyle\sum\limits_{k=0}^{m}}
\binom{n}{k}\binom{m}{k}k!\nu_{n+m-k}\left(  z\right)  ,
\]
and (\ref{moments}) gives%
\begin{align*}
&  L\left[  \varphi_{n}\varphi_{m}\right]  =%
{\displaystyle\sum\limits_{k=0}^{m}}
\binom{n}{k}\binom{m}{k}k!\frac{z^{n+m-k}}{\left(  b+1\right)  _{n+m-k}%
}\left(  1+\frac{z}{n+m-k+b+1}+\cdots\right) \\
&  =\binom{n}{m}\frac{m!}{\left(  b+1\right)  _{n}}z^{n}+\binom{n+1}{m}%
\frac{m!}{\left(  b+1\right)  _{n+1}}z^{n+1}+O\left(  z^{n+2}\right)  ,\quad
z\rightarrow0.
\end{align*}

\end{proof}

$\allowbreak$

\section{Sobolev polynomials}

Let $\{S_{n}\left(  x;\lambda,z\right)  \}_{n\ge0} $ be the sequence of
\textbf{monic} polynomials orthogonal with respect to the inner product
(\ref{inner}). Introducing the sequences%
\[
\mu_{i,j}\left(  \lambda,z\right)  =\left\langle \varphi_{i},\varphi
_{j}\right\rangle ,\quad\nu_{i,j}\left(  z\right)  =L\left[  \varphi
_{i}\varphi_{j}\right]  ,
\]
and using the identity
\[
\Delta\varphi_{n}=n\varphi_{n-1},\quad n\in\mathbb{N}_{0},
\]
we have%
\begin{equation}
\mu_{i,j}=\left\langle \varphi_{i},\varphi_{j}\right\rangle =L\left[
\varphi_{i}\varphi_{j}\right]  +\lambda L\left[  i\varphi_{i-1}j\varphi
_{j-1}\right]  =\nu_{i,j}+\lambda ij\nu_{i-1,j-1}, \label{mu nu}%
\end{equation}
for all $i,j\in\mathbb{N}_{0}.$ Using (\ref{series}) in (\ref{mu nu}), we have%
\begin{equation}
\mu_{i,j}\left(  z\right)  =\frac{\lambda j}{\left(  i-j\right)  !}%
i!\frac{z^{i-1}}{\left(  b+1\right)  _{i-1}}+\frac{\left(  \lambda i-1\right)
j+i+1}{\left(  i+1-j\right)  !}i!\frac{z^{i}}{\left(  b+1\right)  _{i}%
}+O\left(  z^{i+1}\right)  \label{mu series}%
\end{equation}
as $z\rightarrow0,$ with $j\leq i.$

In \cite{MR4093808}, power series solutions for the determinant of a matrix
whose entries are power series in $z$ were obtained. Using (\ref{series}) and
(\ref{mu series}), we see that%
\[
H_{n}\left(  z\right)  \sim z^{\binom{n}{2}}%
{\displaystyle\prod\limits_{k=1}^{n-1}}
\frac{k!}{\left(  b+1\right)  _{k}},
\]
and
\[
\widetilde{H}_{n}\left(  \lambda,z\right)  \sim\lambda^{n-1}z^{\binom{n-1}{2}}%
{\displaystyle\prod\limits_{k=1}^{n-2}}
\frac{\left(  k+1\right)  \left(  k+1\right)  !}{\left(  b+1\right)  _{k}},
\]
as $z\rightarrow0,$ where $H_{0}=\widetilde{H}_{0}=1$ and
\[
H_{n}\left(  z\right)  =\underset{0\leq i,j\leq n-1}{\det}\left(  \nu
_{i,j}\right)  ,\quad\widetilde{H}_{n}\left(  \lambda,z\right)
=\underset{0\leq i,j\leq n-1}{\det}\left(  \mu_{i,j}\right)  ,\quad
n\in\mathbb{N}.
\]

Since the determinants $H_{n}\left(  z\right)  $ and the norms of the
polynomials are related by (see \cite{MR0481884}, Theorem 3.2)
\[
h_{n}\left(  z\right)  =\frac{H_{n+1}\left(  z\right)  }{H_{n}\left(
z\right)  },
\]
we get
\begin{equation}
h_{n}\left(  z\right)  =\frac{n!}{\left(  b+1\right)  _{n}}z^{n}+O\left(
z^{n+1}\right)  ,\quad n\in\mathbb{N}_{0}, \label{hn z}%
\end{equation}
as $z\rightarrow0.$ Similarly, for all $n\in\mathbb{N}$
\begin{equation}
\widetilde{h}_{n}\left(  \lambda,z\right)  =\lambda\frac{nn!z^{n-1}}{\left(
b+1\right)  _{n-1}}+\frac{n!\left(  n+b-1+b\lambda n\right)  }{\left(
n+b-1\right)  \left(  b+1\right)  _{n}}z^{n}+O\left(  z^{n}\right)  ,
\label{eta z}%
\end{equation}
as $z\rightarrow0,$ where%
\begin{equation}
\left\langle S_{n},S_{n}\right\rangle =\widetilde{h}_{n}\left(  \lambda
,z\right)  =\frac{\widetilde{H}_{n+1}\left(  \lambda,z\right)  }%
{\widetilde{H}_{n}\left(  \lambda,z\right)  }. \label{eta}%
\end{equation}

The sequences of the polynomials $\{S_{n}\left(  x;\lambda,z\right)
\}_{n\ge0}$ and $\{P_{n}\left(  x;z\right)  \}_{n\ge0} $ are related by the
following expression.

\begin{theorem}
We have%
\begin{equation}
P_{n}\left(  x;z\right)  =S_{n}\left(  x;\lambda,z\right)  +a_{n}\left(
\lambda,z\right)  S_{n-1}\left(  x;\lambda,z\right)  ,\quad n\in\mathbb{N},
\label{Pn Sn}%
\end{equation}
where
\begin{equation}
a_{n}\left(  \lambda,z\right)  =\frac{\left(  n-1\right)  \lambda}{z}%
\frac{h_{n}\left(  z\right)  }{\widetilde{h}_{n-1}\left(  z,\lambda\right)
},\quad n\in\mathbb{N}. \label{an}%
\end{equation}

\end{theorem}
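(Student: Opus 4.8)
The plan is to expand $P_n$ in the basis of Sobolev orthogonal polynomials $\{S_k\}_{k\le n}$ and show that only the top two coefficients survive. Since $P_n$ is monic of degree $n$, write
\[
P_n(x;z) = S_n(x;\lambda,z) + \sum_{k=0}^{n-1} c_{n,k}\,S_k(x;\lambda,z),
\qquad c_{n,k} = \frac{\left\langle P_n, S_k\right\rangle}{\widetilde h_k(\lambda,z)} .
\]
So the heart of the matter is to compute $\left\langle P_n, S_k\right\rangle$ for $k\le n-1$. Using the definition \eqref{inner},
\[
\left\langle P_n, S_k\right\rangle = L\left[P_n S_k\right] + \lambda\,L\left[\Delta P_n\,\Delta S_k\right].
\]
The first term vanishes for $k\le n-1$ because $P_n$ is $L$-orthogonal to all polynomials of lower degree. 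For the second term, the key tool is the structure relation \eqref{diff Pn}: $\Delta P_n = nP_{n-1} + \xi_n P_{n-2}$. Hence
\[
\left\langle P_n, S_k\right\rangle = \lambda\, L\left[(nP_{n-1}+\xi_n P_{n-2})\,\Delta S_k\right].
\]
Now $\Delta S_k$ has degree $k-1\le n-2$. By $L$-orthogonality of $P_{n-1}$ and $P_{n-2}$ to lower-degree polynomials, $L\left[P_{n-1}\Delta S_k\right]=0$ unless $k-1\ge n-1$, i.e. $k\ge n$, and $L\left[P_{n-2}\Delta S_k\right]=0$ unless $k-1\ge n-2$, i.e. $k\ge n-1$. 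Therefore $\left\langle P_n,S_k\right\rangle = 0$ for all $k\le n-2$, and the only possibly nonzero lower coefficient is $c_{n,n-1} =: a_n(\lambda,z)$. This already gives \eqref{Pn Sn}; it remains to identify $a_n$.

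For the coefficient itself,
\[
a_n(\lambda,z) = \frac{\left\langle P_n, S_{n-1}\right\rangle}{\widetilde h_{n-1}(\lambda,z)}
= \frac{\lambda\, L\left[(nP_{n-1}+\xi_n P_{n-2})\,\Delta S_{n-1}\right]}{\widetilde h_{n-1}(\lambda,z)} .
\]
Since $\Delta S_{n-1}$ has degree $n-2$ with leading coefficient $n-1$ (as $S_{n-1}$ is monic of degree $n-1$ and $\Delta x^{n-1} = (n-1)x^{n-2}+\cdots$), orthogonality kills the $P_{n-1}$ contribution entirely, and $L\left[P_{n-2}\Delta S_{n-1}\right] = (n-1)\,L\left[x^{n-2}P_{n-2}\right] = (n-1)h_{n-2}(z)$. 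Thus
\[
a_n(\lambda,z) = \frac{\lambda\,\xi_n\,(n-1)\,h_{n-2}(z)}{\widetilde h_{n-1}(\lambda,z)} .
\]
Finally use \eqref{xi}, $\xi_n = \gamma_n\gamma_{n-1}/z$, together with the standard norm relations $\gamma_n = h_n/h_{n-1}$ from \eqref{gamma-h}: then $\xi_n h_{n-2} = \gamma_n\gamma_{n-1}h_{n-2}/z = h_n/z$, which collapses the expression to
\[
a_n(\lambda,z) = \frac{(n-1)\lambda}{z}\cdot\frac{h_n(z)}{\widetilde h_{n-1}(\lambda,z)},
\]
matching \eqref{an}.

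The one genuinely delicate point is the bookkeeping of leading coefficients when applying $L$-orthogonality to $\Delta S_{n-1}$: I must be sure that $\Delta S_{n-1}$ really is a polynomial of exact degree $n-2$ with leading coefficient $n-1$, so that $L[P_{n-2}\Delta S_{n-1}]$ picks out exactly $(n-1)h_{n-2}$ and nothing else, and that no lower-order terms of $\Delta S_{n-1}$ contribute (they don't, again by orthogonality of $P_{n-2}$). Everything else is a routine chain of substitutions using \eqref{diff Pn}, \eqref{xi}, \eqref{gamma-h}, and the definitions of $h_n$ and $\widetilde h_{n-1}$; no asymptotics in $z$ are needed for this theorem, only the exact algebraic identities.
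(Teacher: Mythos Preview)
Your proof is correct and follows essentially the same approach as the paper: expand $P_n$ in the Sobolev basis, use the structure relation \eqref{diff Pn} for $\Delta P_n$ together with $L$-orthogonality to kill all coefficients $c_{n,k}$ with $k\le n-2$, then evaluate $c_{n,n-1}$ via the leading coefficient of $\Delta S_{n-1}$ and simplify $\xi_n h_{n-2}$ to $h_n/z$ using \eqref{xi} and \eqref{gamma-h}. The steps, their order, and the identities invoked match the paper's argument almost line for line.
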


\begin{proof}
Since the sequence of polynomials\emph{ $\{S_{n}(x;\lambda,z)\}_{n\geq0}$ }is
a basis of $\mathbb{R}\left[  x\right]  $ and $P_{n}\left(  x;z\right)  ,$
$S_{n}\left(  x;\lambda,z\right)  $ are monic, it follows that%
\[
P_{n}=S_{n}+%
{\displaystyle\sum\limits_{k=0}^{n-1}}
c_{n,k}S_{k}.
\]
Using orthogonality, we have%
\[
c_{n,k}=\frac{\left\langle P_{n},S_{k}\right\rangle }{\widetilde{h}_{k}},
\]
and using (\ref{inner}) we get%
\begin{equation}
\widetilde{h}_{k}c_{n,k}=L\left[  P_{n}S_{k}\right]  +\lambda L\left[  \Delta
P_{n}\Delta S_{k}\right]  . \label{cnk1}%
\end{equation}

Using (\ref{diff Pn}) in (\ref{cnk1}), we obtain%
\[
\widetilde{h}_{k}c_{n,k}=L\left[  P_{n}S_{k}\right]  +\lambda nL\left[
P_{n-1}\Delta S_{k}\right]  +\lambda\xi_{n}L\left[  P_{n-2}\Delta
S_{k}\right]  =0
\]
for $0\leq k\leq n-2,$ and therefore the only nonzero coefficient is%
\[
c_{n,n-1}=\lambda\frac{\xi_{n}}{\widetilde{h}_{n-1}}L\left[  P_{n-2}\Delta
S_{n-1}\right]  .
\]
But since
\[
\Delta S_{n-1}=\left(  n-1\right)  x^{n-2}+O\left(  x^{n-3}\right)  =\left(
n-1\right)  P_{n-2}+O\left(  x^{n-3}\right)  ,
\]
we see that%
\[
L\left[  P_{n-2}\Delta S_{n-1}\right]  =\left(  n-1\right)  h_{n-2}.
\]
Finally, we can use (\ref{gamma-h}) and (\ref{xi}) to obtain%
\begin{equation}
\xi_{n}h_{n-2}=\frac{\gamma_{n}\gamma_{n-1}}{z}h_{n-2}=\frac{h_{n}}{z}.
\label{xin h}%
\end{equation}
Thus, we conclude that%
\[
c_{n,n-1}=\left(  n-1\right)  \lambda\frac{\xi_{n}}{\widetilde{h}_{n-1}%
}h_{n-2}=\frac{\left(  n-1\right)  \lambda}{z}\frac{h_{n}}{\widetilde{h}%
_{n-1}}.
\]

\end{proof}

\begin{remark}
If we use (\ref{eta z})-(\ref{hn z}) in (\ref{an}), then we get%
\begin{equation}
a_{n}\left(  \lambda,z\right)  =\frac{nz}{\left(  n+b\right)  \left(
n+b-1\right)  }+O\left(  z^{2}\right)  ,\quad z\rightarrow0,\quad n\geq2.
\label{an z}%
\end{equation}

\end{remark}

Next, we shall find a recurrence for the Sobolev norms $\widetilde{h}%
_{n}\left(  \lambda,z\right)  .$

\begin{theorem}
\label{th-nor} For all $n\in\mathbb{N},$ the functions $\widetilde{h}%
_{n}\left(  \lambda,z\right)  $ defined by (\ref{eta}) satisfy the nonlinear
recurrence
\begin{equation}
\widetilde{h}_{n}=\lambda n^{2}h_{n-1}+\left(  1+\lambda\frac{\gamma_{n}%
\gamma_{n-1}}{z^{2}}\right)  h_{n}-\left(  n-1\right)  ^{2}\frac{\lambda^{2}%
}{z^{2}}\frac{h_{n}^{2}}{\widetilde{h}_{n-1}}. \label{req eta}%
\end{equation}

\end{theorem}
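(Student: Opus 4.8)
The plan is to combine the connection formula (\ref{Pn Sn}) with the orthogonality of the two families and the structure relation (\ref{diff Pn}). First I would expand $\langle P_{n},P_{n}\rangle$ by substituting $P_{n}=S_{n}+a_{n}S_{n-1}$ from (\ref{Pn Sn}). By bilinearity and symmetry of the inner product (\ref{inner}) and the orthogonality relation $\langle S_{n},S_{n-1}\rangle=0$, the cross term drops out and
\[
\langle P_{n},P_{n}\rangle=\langle S_{n},S_{n}\rangle+a_{n}^{2}\langle S_{n-1},S_{n-1}\rangle=\widetilde{h}_{n}+a_{n}^{2}\widetilde{h}_{n-1},
\]
so that $\widetilde{h}_{n}=\langle P_{n},P_{n}\rangle-a_{n}^{2}\widetilde{h}_{n-1}$. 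It then remains to evaluate the two pieces on the right-hand side.

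For the first piece, $\langle P_{n},P_{n}\rangle=L[P_{n}^{2}]+\lambda L[(\Delta P_{n})^{2}]$, where $L[P_{n}^{2}]=h_{n}$ by definition of $h_{n}$. For the second term I would substitute $\Delta P_{n}=nP_{n-1}+\xi_{n}P_{n-2}$ from (\ref{diff Pn}); since $P_{n-1}$ and $P_{n-2}$ are $L$--orthogonal, the cross term vanishes and $L[(\Delta P_{n})^{2}]=n^{2}h_{n-1}+\xi_{n}^{2}h_{n-2}$. The crucial step is to rewrite $\xi_{n}^{2}h_{n-2}$ by means of the identity $\xi_{n}h_{n-2}=h_{n}/z$ established in (\ref{xin h}), which gives $\xi_{n}^{2}h_{n-2}=\xi_{n}h_{n}/z=\gamma_{n}\gamma_{n-1}h_{n}/z^{2}$, and hence
\[
\langle P_{n},P_{n}\rangle=\lambda n^{2}h_{n-1}+\Bigl(1+\lambda\frac{\gamma_{n}\gamma_{n-1}}{z^{2}}\Bigr)h_{n},
\]
which already matches the first two terms of (\ref{req eta}).

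For the second piece, I would insert the explicit value $a_{n}=\frac{(n-1)\lambda}{z}\frac{h_{n}}{\widetilde{h}_{n-1}}$ from (\ref{an}) to get $a_{n}^{2}\widetilde{h}_{n-1}=\frac{(n-1)^{2}\lambda^{2}}{z^{2}}\frac{h_{n}^{2}}{\widetilde{h}_{n-1}^{2}}\widetilde{h}_{n-1}=\frac{(n-1)^{2}\lambda^{2}}{z^{2}}\frac{h_{n}^{2}}{\widetilde{h}_{n-1}}$. Subtracting this from the expression for $\langle P_{n},P_{n}\rangle$ yields exactly (\ref{req eta}); for $n=1$ the last term vanishes (because of the factor $n-1$) and the identity reduces to $\widetilde{h}_{1}=\lambda h_{0}+h_{1}$, consistent with $\Delta P_{1}=1$ and $\gamma_{0}=0$.

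The computation is short, with no genuine analytic difficulty; the points that require care are the bookkeeping in the first step, where one must use the $\langle\cdot,\cdot\rangle$--orthogonality $\langle S_{n},S_{n-1}\rangle=0$ and not the $L$--orthogonality of the $P_{n}$, and, in the second step, remembering to invoke (\ref{xin h}) — without replacing $\xi_{n}^{2}h_{n-2}$ by $\gamma_{n}\gamma_{n-1}h_{n}/z^{2}$ the result would not be in the stated form. The use of (\ref{an}) implicitly requires $\widetilde{h}_{n-1}\neq0$, which holds since (\ref{inner}) is an inner product.
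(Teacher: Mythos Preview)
Your proof is correct and uses the same ingredients as the paper: the connection formula (\ref{Pn Sn}), the structure relation (\ref{diff Pn}), the identity (\ref{xin h}), and the explicit form (\ref{an}) of $a_{n}$. The only organizational difference is the starting point: the paper begins from $\widetilde{h}_{n}=\langle S_{n},P_{n}\rangle$, then substitutes $\Delta S_{n}=\Delta P_{n}-a_{n}\Delta S_{n-1}$ and must evaluate the extra mixed term $L[\Delta S_{n-1}\,\Delta P_{n}]=(n-1)\xi_{n}h_{n-2}$, whereas you expand $\langle P_{n},P_{n}\rangle$ bilinearly in the $S$-basis so that the cross term vanishes by $\langle\cdot,\cdot\rangle$-orthogonality and only $\langle P_{n},P_{n}\rangle$ and $a_{n}^{2}\widetilde{h}_{n-1}$ need to be computed. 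Your route is marginally shorter for this reason; otherwise the two arguments are essentially the same.
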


\begin{proof}
Using (\ref{inner}) and (\ref{eta}) we get%
\[
\widetilde{h}_{n}=\left\langle S_{n},P_{n}\right\rangle =L\left[  S_{n}%
P_{n}\right]  +\lambda L\left[  \Delta S_{n}\Delta P_{n}\right]
=h_{n}+\lambda L\left[  \Delta S_{n}\Delta P_{n}\right]  .
\]
But from (\ref{Pn Sn}) we have%
\[
L\left[  \Delta S_{n}\Delta P_{n}\right]  =L\left[  \left(  \Delta
P_{n}\right)  ^{2}\right]  -a_{n}L\left[  \Delta S_{n-1}\Delta P_{n}\right]
,
\]
while (\ref{diff Pn}) gives%
\[
L\left[  \left(  \Delta P_{n}\right)  ^{2}\right]  =n^{2}h_{n-1}+\xi_{n}%
^{2}h_{n-2},
\]
and
\begin{align*}
L\left[  \Delta S_{n-1}\Delta P_{n}\right]   &  =nL\left[  \Delta
S_{n-1}P_{n-1}\right]  +\xi_{n}L\left[  \Delta S_{n-1}P_{n-2}\right] \\
&  =0+\left(  n-1\right)  \xi_{n}h_{n-2}.
\end{align*}

Hence,%
\[
\widetilde{h}_{n}=h_{n}+\lambda n^{2}h_{n-1}+\lambda\left[  \xi_{n}^{2}%
-a_{n}\left(  n-1\right)  \xi_{n}\right]  h_{n-2},
\]
or using (\ref{an}) and (\ref{xin h}), we conclude that%
\[
\widetilde{h}_{n}=h_{n}+\lambda n^{2}h_{n-1}+\lambda\frac{\gamma_{n}%
\gamma_{n-1}}{z^{2}}h_{n}-\left(  n-1\right)  ^{2}\frac{\lambda^{2}}{z^{2}%
}\frac{h_{n}^{2}}{\widetilde{h}_{n-1}}.
\]

\end{proof}

Since $\widetilde{h}_{0}=h_{0}$ we know from (\ref{initial}) that{ }%
$\gamma_{0}=0,$ we can use (\ref{req eta}) and obtain%
\begin{align*}
\widetilde{h}_{1}  &  =h_{1}+\lambda h_{0},\\
\widetilde{h}_{2}  &  =h_{2}+\left(  4h_{1}+\frac{\gamma_{1}\gamma_{2}h_{2}%
}{z^{2}}-\frac{\lambda}{z^{2}}\frac{h_{2}^{2}}{h_{1}+\lambda h_{0}}\right)
\lambda.
\end{align*}
Using (\ref{an}), it follows that%
\[
a_{1}=0,\quad a_{2}=\frac{\lambda h_{2}}{z\left(  h_{1}+\lambda h_{0}\right)
}.
\]

\begin{remark}
Note that using (\ref{an}), we can rewrite (\ref{req eta}) as%
\[
\frac{n\lambda}{z}\frac{h_{n+1}}{a_{n+1}}=\lambda n^{2}h_{n-1}+\left(
1+\lambda\frac{\gamma_{n}\gamma_{n-1}}{z^{2}}-\left(  n-1\right)
\frac{\lambda}{z}a_{n}\right)  h_{n},
\]
or, using (\ref{gamma-h})
\begin{equation}
\frac{n\gamma_{n+1}}{za_{n+1}}=\frac{n^{2}}{\gamma_{n}}+\frac{1}{\lambda
}+\frac{\gamma_{n}\gamma_{n-1}}{z^{2}}-\frac{\left(  n-1\right)  a_{n}}%
{z},\quad n\in\mathbb{N}. \label{req an}%
\end{equation}

\end{remark}

\section{Asymptotic analysis}

In \cite{MR4136730}, it was shown that the 3-term recurrence coefficients of
the generalized Charlier polynomials have the asymptotic expansions
\[
\beta_{n}\left(  z\right)  =n+\frac{bz}{n^{2}}-\frac{b\left(  2b+1\right)
z}{n^{3}}+O\left(  n^{-4}\right)  ,
\]
and%
\begin{equation}
\gamma_{n}\left(  z\right)  =z-zbn^{-1}+zb^{2}n^{-2}-bz\left(  2z+b^{2}%
\right)  n^{-3}+O\left(  n^{-4}\right)  , \label{gn asy}%
\end{equation}
as $n\rightarrow\infty.$ This work was continued in \cite{asy}, where
asymptotic expansions for all discrete semiclassical orthogonal polynomials
were obtained.

\begin{theorem}
Let the functions $a_{n}\left(  \lambda,z\right)  $ satisfy the nonlinear
recurrence (\ref{req an}), with $a_{n}\left(  \lambda,0\right)  =0.$ If we
write%
\begin{equation}
a_{n}\left(  \lambda,z\right)  \sim z%
{\displaystyle\sum\limits_{k\geq1}}
\alpha_{k}\left(  \lambda,z\right)  n^{-k},\quad n\rightarrow\infty,
\label{an asympt}%
\end{equation}
then%
\[
\alpha_{1}=1,\quad\alpha_{2}=1-2b,\quad\alpha_{3}=1+3b\left(  b-1\right)
-\frac{z}{\lambda}.
\]

\end{theorem}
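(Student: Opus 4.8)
The plan is to feed the known large-$n$ behaviour into the recurrence and match coefficients of powers of $n^{-1}$. First I rewrite the recurrence (\ref{req an}) by solving for $a_{n+1}$,
\[
a_{n+1}=\frac{n\,\gamma_{n+1}}{z\left(\dfrac{n^{2}}{\gamma_{n}}+\dfrac{1}{\lambda}+\dfrac{\gamma_{n}\gamma_{n-1}}{z^{2}}-\dfrac{(n-1)\,a_{n}}{z}\right)},
\]
and on the right-hand side I substitute the expansion (\ref{gn asy}) of $\gamma_{n}$ --- using it also for $\gamma_{n-1}$ and $\gamma_{n+1}$ after re-expanding $(n\pm1)^{-j}$ in powers of $n^{-1}$ --- together with the ansatz $a_{n}\sim z\sum_{k\geq1}\alpha_{k}n^{-k}$. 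Expanding the whole right-hand side as a series in $n^{-1}$ and comparing it, coefficient by coefficient, with the left-hand side $a_{n+1}\sim z\sum_{k\geq1}\alpha_{k}(n+1)^{-k}$ produces, order by order, one linear equation for each $\alpha_{k}$ in terms of $\alpha_{1},\dots,\alpha_{k-1}$ and the already known expansion coefficients of $\gamma_{n}$; solving the first three of these gives the claimed values.

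Carrying this out to the order needed for $\alpha_{3}$: from (\ref{gn asy}) one has $\gamma_{n}/z=1-bn^{-1}+b^{2}n^{-2}+O(n^{-3})$, whence $n^{2}/\gamma_{n}=\tfrac{n^{2}}{z}+\tfrac{bn}{z}+O(n^{-1})$ and $\gamma_{n}\gamma_{n-1}/z^{2}=1+O(n^{-1})$, while $\gamma_{n+1}=z\bigl(1-bn^{-1}+(b+b^{2})n^{-2}+O(n^{-3})\bigr)$. Since only $\alpha_{1}$ contributes to $(n-1)a_{n}/z$ at order $n^{0}$, the denominator in the display above equals $\tfrac{n^{2}}{z}\bigl(1+bn^{-1}+c\,n^{-2}+O(n^{-3})\bigr)$ with $c=z\bigl(\tfrac1\lambda+1-\alpha_{1}\bigr)$; inverting this series and multiplying by $n\gamma_{n+1}/z$ puts the right-hand side in the form $\tfrac{z}{n}\bigl(1-2bn^{-1}+(b+3b^{2}-c)n^{-2}+O(n^{-3})\bigr)$, whereas the left-hand side is $\tfrac{z\alpha_{1}}{n}+\tfrac{z(\alpha_{2}-\alpha_{1})}{n^{2}}+\tfrac{z(\alpha_{3}-2\alpha_{2}+\alpha_{1})}{n^{3}}+O(n^{-4})$. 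Matching the coefficient of $n^{-1}$ gives $\alpha_{1}=1$, hence $c=z/\lambda$; matching $n^{-2}$ then gives $\alpha_{2}=1-2b$; and matching $n^{-3}$ gives $\alpha_{3}=2\alpha_{2}-\alpha_{1}+b+3b^{2}-\tfrac{z}{\lambda}=1+3b(b-1)-\tfrac{z}{\lambda}$, as asserted.

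The argument is elementary, so the one real task is bookkeeping: re-expanding the shifted arguments $(n\pm1)^{-j}$ consistently and keeping enough terms --- the expansion (\ref{gn asy}), available through $O(n^{-4})$, is far more than is needed. The single conceptual point worth flagging is that the Sobolev parameter $\lambda$ enters the recurrence only through the summand $1/\lambda$, which has relative size $O(n^{-2})$ against the dominant term $n^{2}/z$ of the denominator; this is why $\lambda$ is absent from $\alpha_{1}$ and $\alpha_{2}$ and first appears, as $-z/\lambda$, in $\alpha_{3}$. The hypothesis $a_{n}(\lambda,0)=0$ is encoded in the overall factor $z$ of the ansatz and singles out the relevant solution of the first-order recurrence; as a check, the leading-in-$z$ formula (\ref{an z}) already displays $\alpha_{1}=1$ and $\alpha_{2}=1-2b$.
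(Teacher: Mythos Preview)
Your proof is correct and follows essentially the same route as the paper: substitute the known expansion of $\gamma_{n}$ and the ansatz for $a_{n}$ into the recurrence and match powers of $n^{-1}$. The only difference is that the paper begins with a more general Laurent ansatz $a_{n}=\sum_{k=-N}^{N}u_{k}n^{-k}$, finds that the leading balance forces $u_{-1}(u_{-1}-1)=0$, and so exhibits two formal solution branches (one with $a_{n}\sim n$ and one with $a_{n}\sim z/n$) before invoking $a_{n}(\lambda,0)=0$ to select the second; you instead take the form $a_{n}\sim z\sum_{k\geq1}\alpha_{k}n^{-k}$ directly from the hypothesis and note at the end that this is the branch consistent with $a_{n}(\lambda,0)=0$. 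Both are legitimate readings of the statement as written, and your explicit coefficient computation (in particular the identification $c=z/\lambda$ once $\alpha_{1}=1$) is carried through cleanly.
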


\begin{proof}
Let's start by rewriting (\ref{req an}) as%
\begin{equation}
\left[  n^{2}\frac{z}{\gamma_{n}}+\frac{z}{\lambda}+\frac{\gamma_{n}%
\gamma_{n-1}}{z}-\left(  n-1\right)  a_{n}\right]  a_{n+1}-n\gamma
_{n+1}=0,\quad n\in\mathbb{N}, \label{req an 1}%
\end{equation}
and suppose that%
\begin{equation}
a_{n}\left(  \lambda,z\right)  =%
{\displaystyle\sum\limits_{k=-N}^{N}}
u_{k}\left(  \lambda,z\right)  n^{-k}. \label{an series}%
\end{equation}
Using (\ref{gn asy}) and (\ref{an series}) in (\ref{req an 1}), we see that as
$n\rightarrow\infty$
\[
u_{k}=0,\quad k\leq-2,\quad u_{-1}\left(  u_{-1}-1\right)  =0.
\]
Thus, there are two solutions of (\ref{req an 1}), one with asymptotic
behavior%
\[
a_{n}=n+b+1+\left(  b+1+\frac{z}{\lambda}\right)  n^{-1}+O\left(
n^{-2}\right)  ,\quad n\rightarrow\infty
\]
and the other
\[
a_{n}=zn^{-1}+\left(  1-2b\right)  zn^{-2}{+\left(  1+3b\left(  b-1\right)
-\frac{z}{\lambda}\right)  zn^{-3}}+O\left(  n^{-4}\right)  ,\quad
n\rightarrow\infty.
\]
Since from (\ref{an z}) we know that $a_{n}\left(  \lambda,0\right)  =0,$ we
must choose the second solution and (\ref{an asympt}) follows.
\end{proof}

\begin{remark}
Using (\ref{an}) and (\ref{an asympt}), we deduce that
\[
\frac{h_{n+1}\left(  z\right)  }{\widetilde{h}_{n}\left(  z,\lambda\right)
}=\frac{z^{2}}{(n+1)n\lambda}+O(n^{-3}),\quad n\rightarrow\infty.
\]
In particular,
\[
\lim_{n\rightarrow\infty}n^{2}\frac{h_{n+1}\left(  z\right)  }{\widetilde{h}%
_{n}\left(  z,\lambda\right)  }=\frac{z^{2}}{\lambda},
\]
and from (\ref{gamma-h}) and (\ref{gn asy}), we obtain
\[
\lim_{n\rightarrow\infty}\frac{\widetilde{h}_{n}\left(  z,\lambda\right)
}{n^{2}h_{n}\left(  z\right)  }=\frac{\lambda}{z}.
\]
The above asymptotic behavior of the norms can also be obtained from Theorem
\ref{th-nor} via Poincar\'{e}'s Theorem. That technique has given fruitful
results to obtain asymptotic properties in the context of Sobolev orthogonality.
\end{remark}

In \cite{asy} the asymptotic behavior of the generalized Charlier polynomials
was studied, and the following result was proved.

\begin{theorem}
The generalized Charlier polynomials satisfy%
\begin{equation}
\frac{P_{n}\left(  x;z\right)  }{\varphi_{n}\left(  x\right)  }\sim%
{\displaystyle\sum\limits_{k\geq0}}
\omega_{k}\left(  x;z\right)  n^{-k},\quad n\rightarrow\infty, \label{Pn asy}%
\end{equation}
with%
\begin{align*}
\omega_{0}  &  =1,\quad\omega_{1}=z,\quad\omega_{2}=\left(  x+1-b\right)
z+\frac{z^{2}}{2},\\
\omega_{3}  &  =\left[  \left(  x+1\right)  \left(  x+1-b\right)
+b^{2}\right]  z+\left[  2\left(  x+1-b\right)  +1\right]  \frac{z^{2}}%
{2}+\frac{z^{3}}{6}.
\end{align*}

\end{theorem}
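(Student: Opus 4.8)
The plan is to pass to the normalized ratio
\[
R_{n}\left(  x;z\right)  :=\frac{P_{n}\left(  x;z\right)  }{\varphi_{n}\left(
x\right)  },
\]
regarded for $x\in\mathbb{C}\setminus\mathbb{N}_{0}$, so that $\varphi
_{n}\left(  x\right)  \neq0$ for all large $n$ and (\ref{Pn asy}) makes sense
literally; equivalently, one proves the coefficient-wise statement
$P_{n}=\varphi_{n}\sum_{k\geq0}\omega_{k}n^{-k}$. Using
\[
\varphi_{n+1}\left(  x\right)  =\left(  x-n\right)  \varphi_{n}\left(
x\right)  ,\qquad\varphi_{n}\left(  x\right)  =\left(  x-n+1\right)
\varphi_{n-1}\left(  x\right)  ,
\]
and dividing the three-term recurrence (\ref{3-term}) by $\varphi_{n+1}\left(
x\right)  $, one obtains the recurrence
\begin{equation}
\left(  x-n\right)  R_{n+1}=\left(  x-\beta_{n}\right)  R_{n}-\frac{\gamma_{n}
}{x-n+1}R_{n-1},\quad n\in\mathbb{N}. \label{Rreq}
\end{equation}
Dividing the difference equation (\ref{diff Pn}) by $\varphi_{n-1}\left(
x\right)  $ yields, in the same fashion, $\left(  x+1\right)  R_{n}\left(
x+1\right)  =\left(  x-n+1\right)  R_{n}\left(  x\right)  +nR_{n-1}\left(
x\right)  +\frac{\xi_{n}}{x-n+2}R_{n-2}\left(  x\right)  $, which can be used
as an independent consistency check.

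First I would fix the normalization $\omega_{0}\equiv1$. Both $P_{n}$ and
$\varphi_{n}$ are monic of degree $n$; writing the remainder in the
falling-factorial basis, $P_{n}-\varphi_{n}=\sum_{j=0}^{n-1}d_{n,j}\varphi
_{j}$, and using $\varphi_{j}/\varphi_{n}=\left[  \prod_{k=j}^{n-1}\left(
x-k\right)  \right]  ^{-1}$, one gets $R_{n}-1=\sum_{j=0}^{n-1}d_{n,j}\left[
\prod_{k=j}^{n-1}\left(  x-k\right)  \right]  ^{-1}$, a sum in which every
summand tends to $0$; the vanishing of the whole sum follows from the control
of the $d_{n,j}$ afforded by the bounded recurrence coefficients (via
(\ref{gn asy}) and the relations of Section 2). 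Hence $R_{n}\left(  x\right)
\rightarrow1$.

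Next I would substitute the ansatz $R_{m}\left(  x;z\right)  \sim\sum_{k\geq
0}\omega_{k}\left(  x;z\right)  m^{-k}$ into (\ref{Rreq}), expand $\left(
n\pm1\right)  ^{-j}$, expand $\beta_{n}$ and $\gamma_{n}$ by (\ref{gn asy}),
and expand $\left(  x-n+1\right)  ^{-1}=-n^{-1}-\left(  x+1\right)
n^{-2}-\cdots$, all as formal series in $n^{-1}$, and then match coefficients
of equal powers of $n$. The coefficients of $n^{1}$ and $n^{0}$ vanish
identically. The key algebraic observation is that at order $n^{-k}$ with
$k\geq1$ the contributions involving $\omega_{k+1}$ on the two sides of
(\ref{Rreq}) cancel, and the terms $x\,\omega_{k}$ cancel as well, so the
identity collapses to
\[
k\,\omega_{k}\left(  x;z\right)  =\Phi_{k}\left(  x,z;\omega_{0},\ldots
,\omega_{k-1}\right)  ,
\]
with $\Phi_{k}$ an explicit polynomial in $x$ and $z$ depending only on
$\omega_{0},\ldots,\omega_{k-1}$ and on finitely many coefficients from
(\ref{gn asy}). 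Thus the $\omega_{k}$ are determined unambiguously and
recursively, with no homogeneous terms to pin down. Carrying this out for
$k=1,2,3$ (order $n^{-1}$ gives $\omega_{1}=z\omega_{0}=z$; order $n^{-2}$
gives $2\omega_{2}=\left(  x+1+z\right)  \omega_{1}+\left(  x+1-2b\right)
z\omega_{0}$; order $n^{-3}$ is analogous) and simplifying reproduces the
stated values of $\omega_{0},\omega_{1},\omega_{2},\omega_{3}$.

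The step that does the real work is not this bookkeeping but the justification
that the formal series so produced is a genuine asymptotic expansion of
$R_{n}$, that is, that for every $K$
\[
R_{n}\left(  x;z\right)  -\sum_{k=0}^{K}\omega_{k}\left(  x;z\right)  n^{-k}
=O\left(  n^{-K-1}\right)  ,\quad n\rightarrow\infty,
\]
uniformly on compact subsets of $\mathbb{C}\setminus\mathbb{N}_{0}$. I would
obtain this from the general analysis of \cite{asy}: insert the truncated
series into (\ref{Rreq}), derive a recurrence for the remainder, and close it
by a contraction/fixed-point argument exploiting the decay of $\beta_{n}-n$
and $\gamma_{n}-z$ from (\ref{gn asy}). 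Establishing this uniform error bound,
together with the normalization $\omega_{0}\equiv1$, is the main obstacle;
once these are in place the determination of the $\omega_{k}$ is routine.
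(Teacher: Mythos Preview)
The paper does not actually prove this theorem: immediately before the statement it says ``In \cite{asy} we studied the asymptotic behavior of the generalized Charlier polynomials, and proved the following result,'' and no argument is given here. So there is nothing to compare your attempt against; you are in fact supplying more than the paper does.

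That said, your sketch is sound. Dividing (\ref{3-term}) by $\varphi_{n+1}(x)$ gives exactly your recurrence for $R_{n}$, and the coefficient matching works as you describe: at order $n^{-k}$ the $-\omega_{k+1}$ terms coming from $-nR_{n+1}$ and from $-\beta_{n}R_{n}$ cancel, as do the $x\omega_{k}$ terms, leaving $k\,\omega_{k}=\Phi_{k}(\omega_{0},\dots,\omega_{k-1})$. I checked your $n^{-1}$ and $n^{-2}$ identities and they reproduce $\omega_{1}=z$ and $\omega_{2}=(x+1-b)z+\tfrac{z^{2}}{2}$. For the analytic step---upgrading the formal expansion to a genuine asymptotic one with controlled remainder---you defer to \cite{asy}, which is precisely what the present paper does as well.
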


We have now all the elements to state our main result.

\begin{theorem}
Suppose that
\begin{equation}
\frac{S_{n}\left(  x;\lambda,z\right)  }{\varphi_{n}\left(  x\right)  }\sim%
{\displaystyle\sum\limits_{k\geq0}}
\sigma_{k}\left(  x;z\right)  n^{-k},\quad n\rightarrow\infty. \label{Sn asy}%
\end{equation}
Then,%
\begin{align*}
\sigma_{0}  &  =1,\quad\sigma_{1}=z,\quad\sigma_{2}=\omega_{2}+z,\\
\sigma_{3}  &  =\omega_{3}+\left[  x+1+z+\alpha_{2}\right]  z,\\
\sigma_{4}  &  =\omega_{4}+\left[  (x+1)^{2}+\left(  z+\alpha_{2}\right)
(x+1)+z\left(  2+\alpha_{2}\right)  +\omega_{2}+\alpha_{3}\right]  z.
\end{align*}

\end{theorem}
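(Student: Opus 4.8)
The plan is to convert the algebraic identity (\ref{Pn Sn}) into an identity between formal asymptotic series in $n^{-1}$ and to read off the coefficients $\sigma_{k}$ by matching. Since the \emph{shape} (\ref{Sn asy}) of the expansion is assumed in the statement, no analytic justification of its existence is required; the whole argument is a power-series computation resting on the two inputs already recorded above: the expansion (\ref{Pn asy}) of $P_{n}/\varphi_{n}$ and the expansion (\ref{an asympt}) of $a_{n}$, i.e. $a_{n}\sim z\alpha_{1}n^{-1}+z\alpha_{2}n^{-2}+z\alpha_{3}n^{-3}+\cdots$ with $\alpha_{1}=1$, $\alpha_{2}=1-2b$, $\alpha_{3}=1+3b(b-1)-z/\lambda$.

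First I would divide (\ref{Pn Sn}) by $\varphi_{n}(x)$ and isolate the Sobolev term,
\[
\frac{S_{n}(x;\lambda,z)}{\varphi_{n}(x)}=\frac{P_{n}(x;z)}{\varphi_{n}(x)}-a_{n}(\lambda,z)\,\frac{S_{n-1}(x;\lambda,z)}{\varphi_{n-1}(x)}\cdot\frac{\varphi_{n-1}(x)}{\varphi_{n}(x)} .
\]
The ratio of falling factorials is the object that injects the $x$-dependence of the correction: from (\ref{phi}) one has $\varphi_{n}(x)=\varphi_{n-1}(x)\,(x-n+1)$, hence
\[
\frac{\varphi_{n-1}(x)}{\varphi_{n}(x)}=\frac{1}{x-n+1}=-\frac{1}{n}\sum_{j\ge0}\left(\frac{x+1}{n}\right)^{\!j}=-\sum_{m\ge1}(x+1)^{m-1}n^{-m},\qquad n\to\infty .
\]
Into this I would feed the assumed expansion (\ref{Sn asy}) with $n$ replaced by $n-1$, re-expanding each power $(n-1)^{-k}=n^{-k}(1-n^{-1})^{-k}$ as a series in $n^{-1}$; multiplying the three series (that of $a_{n}$, that of $S_{n-1}/\varphi_{n-1}$, and that of $\varphi_{n-1}/\varphi_{n}$) and collecting powers of $n^{-1}$ produces, at order $n^{-k}$, a relation of the form $\sigma_{k}=\omega_{k}+(\text{polynomial in }x,z,\lambda\text{ built only from }\alpha_{2},\dots,\alpha_{k}\text{ and }\sigma_{0},\dots,\sigma_{k-1})$. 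Because both $a_{n}$ and $\varphi_{n-1}/\varphi_{n}$ begin at order $n^{-1}$, their product begins at order $n^{-2}$, so $\sigma_{0}=\omega_{0}=1$ and $\sigma_{1}=\omega_{1}=z$ are immediate and the system is lower triangular; solving recursively for $k=2,3,4$ yields the stated $\sigma_{2},\sigma_{3},\sigma_{4}$, with $\omega_{4}$ (and $\omega_{2}$) left unexpanded since (\ref{Pn asy}) is needed only through $\omega_{4}$ while $a_{n}$ is needed through $\alpha_{3}$.

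An equivalent and perhaps cleaner organization is to telescope (\ref{Pn Sn}) into the explicit formula
\[
S_{n}=\sum_{j\ge0}(-1)^{j}\,a_{n}a_{n-1}\cdots a_{n-j+1}\,P_{n-j},
\]
divide by $\varphi_{n}$, and use $\varphi_{n-j}(x)/\varphi_{n}(x)=\prod_{i=1}^{j}(x-n+i)^{-1}=O(n^{-j})$ together with $a_{n}a_{n-1}\cdots a_{n-j+1}=O(n^{-j})$ (by (\ref{an z})) to see that the $j$-th summand is $O(n^{-2j})$. Thus only $j=0,1$ contribute to $\sigma_{2},\sigma_{3}$ and only $j=0,1,2$ to $\sigma_{4}$; in particular the $j=2$ term is exactly what produces the $O(z^{2})$ piece visible inside $\sigma_{4}$. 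With either organization the work reduces to carrying each constituent expansion to the correct truncation order and simplifying.

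The main obstacle is bookkeeping rather than ideas: one must compose three asymptotic series, correctly handle the index shifts $n-1\mapsto n$ (and $n-2\mapsto n$ at the top order) in $S_{n-1}/\varphi_{n-1}$ and $P_{n-1}/\varphi_{n-1}$, and retain exactly enough terms of $a_{n}$, of the falling-factorial ratio $\varphi_{n-1}/\varphi_{n}$, and of the $\omega_{k}$ and $\alpha_{k}$ so that no contribution of order $n^{-4}$ is lost. The most delicate point is the low-order part of the expansion of $\varphi_{n-1}(x)/\varphi_{n}(x)$: its first coefficients $1,\,x+1,\,(x+1)^{2},\dots$, once combined with the $(n-1)^{-k}\mapsto n^{-k}$ re-expansion, feed directly into the coefficient of $x$ in $\sigma_{3}$ and into the coefficients of $x^{2}$ and $x$ in $\sigma_{4}$, so any slip there propagates visibly into the final formulas.
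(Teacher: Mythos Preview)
Your approach is correct and is essentially the same as the paper's: both feed the relation (\ref{Pn Sn}), the expansion (\ref{Pn asy}), the expansion (\ref{an asympt}), and the falling-factorial recurrence into a coefficient-matching scheme in powers of $n^{-1}$. The only organizational difference is that the paper multiplies (\ref{Pn Sn}) through by the linear factor $x-n+1$ (so the left side becomes a polynomial times $(P_n-S_n)/\varphi_n$) rather than expanding its reciprocal as a geometric series, which trims one of your three series products; also, in your telescoped variant the bound $a_{n}a_{n-1}\cdots a_{n-j+1}=O(n^{-j})$ should cite (\ref{an asympt}), not (\ref{an z}).
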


\begin{proof}
Using the binomial theorem in (\ref{Sn asy}), we can see that%
\begin{equation}
\frac{S_{n-1}}{\varphi_{n-1}\left(  x\right)  }\sim%
{\displaystyle \sigma_0+\sum\limits_{k\geq1}}
\left[  \sum_{j=0}^{k-1}\binom{k-1}{j}\sigma_{j+1}\right]  n^{-k},\quad
n\rightarrow\infty. \label{Sn -1 asy}%
\end{equation}
Using the recurrence
\[
\varphi_{n}\left(  x\right)  =\left(  x-n+1\right)  \varphi_{n-1}\left(
x\right)
\]
in (\ref{Pn Sn}), we get%
\begin{equation}
\frac{\left(  x-n+1\right)  \left(  P_{n}-S_{n}\right)  }{\varphi_{n}\left(
x\right)  }=a_{n}\frac{S_{n-1}}{\varphi_{n-1}\left(  x\right)  }.
\label{eq-s-mt}%
\end{equation}
{Considering (\ref{Pn asy}), (\ref{Sn asy}) we have
\[
\frac{\left(  x-n+1\right)  \left(  P_{n}-S_{n}\right)  }{\varphi_{n}\left(
x\right)  }\sim(\sigma_{0}-\omega_{0})n+\sum_{k\geq0}\left[  (x+1)(\omega
_{k}-\sigma_{k})-(\omega_{k+1}-\sigma_{k+1})\right]  n^{-k},
\]
and from (\ref{an asympt}) and (\ref{Sn -1 asy})
\[
a_{n}\frac{S_{n-1}}{\varphi_{n-1}\left(  x\right)  }\sim z\sum_{k\geq1}\left[
\alpha_{k}\sigma_{0}+\sum_{j=1}^{k-1}\alpha_{k-j}\sum_{i=0}^{j-1}\binom
{j-1}{i}\sigma_{i+1}\right]  n^{-k},
\]
thus, from (\ref{eq-s-mt}) we deduce
\[
\sigma_{0}=\omega_{0}=1,
\]%
\[
(x+1)(\omega_{0}-\sigma_{0})-(\omega_{1}-\sigma_{1})=0\Rightarrow\sigma_{1}%
=\omega_{1}=z,
\]%
\[
(x+1)(\omega_{k}-\sigma_{k})-(\omega_{k+1}-\sigma_{k+1})=z\left[  \alpha_{k}%
+\sum_{j=1}^{k-1}\alpha_{k-j}\sum_{i=0}^{j-1}\binom{j-1}{i}\sigma
_{i+1}\right]  ,\quad k\geq1.
\]
}

{To obtain $\sigma_{k}$ with $k=2,3,4,$ it is enough to particularize the
above expression. }
\end{proof}

\section{\textbf{Acknowledgements}}

We would like to thank the careful revision of the manuscript by the referees.
Their comments and suggestions have been essential to improve the presentation
and accuracy of the first draft.\newline

The work of the first author was supported by the strategic program
``Innovatives \"{O}-- 2010 plus" from the Upper Austrian Government, and by the
grant SFB F50 (F5009-N15) from the Austrian Science Foundation (FWF). We thank
Prof. Carsten Schneider for his generous sponsorship.

The first author would also like to thank the Isaac Newton Institute for
Mathematical Sciences, Cambridge, for support and hospitality during the
programme "Applicable resurgent asymptotics: towards a universal theory"
(ARA2), where work on this paper was completed. This work was supported by
EPSRC grant no EP/R014604/1.

The work of the second author was partially supported by the Ministry of
Science and Innovation of Spain and the European Regional Development Fund
(ERDF) (grant number PID2021-124472NB-I00); by Consejer\'{\i}a de
Econom\'{\i}a, Conocimiento, Empresas y Universidad de la Junta de
Andaluc\'{\i}a (grant UAL18-FQM-B025-A); by Research Group FQM-0229 (belonging
to Campus of International Excellence CEIMAR); and by the research centre CDTIME.

\newif\ifabfull\abfullfalse
\input apreambl

\end{document}